\newtheorem{assumption}{Assumption}
\newtheorem{example}{Example}
\newtheorem{definition}{Definition}
\newtheorem{proposition}{Proposition}
\newtheorem{remark}{Remark}
\newtheorem{lemma}{Lemma}
\newtheorem{theorem}{Theorem}
\title{\LARGE \bf
Modified Control Barrier Function for Quadratic Program Based Control Design via Sum-of-Squares Programming*
}
\author{Yankai Lin, Michelle S. Chong, and Carlos Murguia% <-this % stops a space
\thanks{*The research leading to these results has received funding from the European Union’s Horizon Europe programme under grant agreement No 101069748 – SELFY project.}% <-this % stops a space
\thanks{Yankai Lin is with the School of Computer Science and Engineering, Wuhan Institute of Technology, PR China. Michelle S. Chong and Carlos Murguia are with Department of Mechanical Engineering, Eindhoven University of Technology, the Netherlands. Carlos Murguia is also with Engineering Systems Design, Singapore University of Technology and Design, Singapore. This work was done while the first author was with the Department of Mechanical Engineering, Eindhoven University of Technology, the Netherlands.
        {\tt\small 25021103@wit.edu.cn;\{m.s.t.chong,c.g.murguia\}@ tue.nl; murguia\_rendon@sutd.edu.sg.}}%
}
\begin{document}

\maketitle

\thispagestyle{empty}
\pagestyle{empty}

%%%%%%%%%%%%%%%%%%%%%%%%%%%%%%%%%%%%%%%%%%%%%%%%%%%%%%%%%%%%%%%%%%%%%%%%%%%%%%%%
\begin{abstract}

We consider a nonlinear control affine system controlled by inputs generated by a quadratic program (QP) induced by a control barrier functions (CBF). Specifically, we slightly modify the condition satisfied by CBFs and study how the modification can positively impact the closed loop behavior of the system. We show that, QP-based controllers designed using the modified CBF condition preserves the desired properties of QP-based controllers using standard CBF conditions. Furthermore, using the generalized S-procedure for polynomial functions, we formulate the design of the modified CBFs as a Sum-Of-Squares (SOS) program, which can be solved efficiently. Via a numerical example, the proposed CBF design is shown to have superior performance over the standard CBF widely used in existing literature.
\end{abstract}

%%%%%%%%%%%%%%%%%%%%%%%%%%%%%%%%%%%%%%%%%%%%%%%%%%%%%%%%%%%%%%%%%%%%%%%%%%%%%%%%
\section{Introduction}

In recent years, the safety of dynamical systems have gained increasing attention by researchers due to its significance in modern engineering applications. Typical examples include autonomous driving, power and electricity networks, and robotics. A commonly used notion of safety for dynamical systems is that the states of the system always remain within a prescribed set, called the \textit{safe set}. A natural tool from dynamical systems theory to study such properties is the invariance of sets \cite{blanchini1999set}. An invariant set has the property that all state trajectories initialized inside it will never leave it in the future. Earlier works attempt to give Lyapunov-like sufficient conditions to certify safety of a set, see \cite{prajna2007framework,tee2009barrier}. However, no control input is considered in \cite{prajna2007framework} and the design proposed in \cite{tee2009barrier} is complicated and might not work well in the presence of input constraints. Other alternatives to guaranteeing the safety of a control system include reachability analysis via Hamilton-Jacobi-Issacs equations \cite{margellos2011hamilton,bansal2017hamilton} and model predictive control (MPC) \cite{rawlings2017model}. Interested readers are referred to the recent paper \cite{wabersich2023data} where these two methods are comprehensively compared and related to the control barrier function (CBF) method, which is the main focus of the paper.

The notion of CBF ensures simultaneously the forward invariance and asymptotic stability of the safe set by imposing a point wise condition on the control input\cite{ames2016control,ames2019control}. These notions are then extended to cover different situations such as robust safety problems \cite{kolathaya2019input,jankovic2018robust}, adaptive safety problems \cite{xiao2021adaptive}, sampled-data systems \cite{breeden2022control}, and so on. A highlight of the notion of CBF is that it can be cast as a constraint of a quadratic program (QP) such that the control law is easy to implement. Moreover, unlike barrier certificate methods \cite{prajna2007framework,wang2023safety,lin2023secondary} where stability is not guaranteed, the QP-based control design can incorporate the notion of control Lyapunov function (CLF) \cite{freeman2008robust} in the same optimization problem. By formulating the CLF constraint as a soft constraint \cite{ames2016control}, the QP will always have a solution. However, the stability of the origin cannot be ensured for the QP-based design proposed in \cite{ames2016control}. Moreover, the QP-based control design might even introduce undesired equilibria \cite{tan2024undesired, reis2020control}. To deal with these issues, modified QPs are proposed. The approaches in \cite{jankovic2018robust} and \cite{tan2024undesired} share similar insights by requiring the existence of an admissible input that satisfies the CLF condition around the origin. While \cite{mestres2023optimization} formulate CLF as a penalty term in the cost function of the QP rather than its constraint.

Although a sufficient amount of effort has been made to investigate the QP part of the design process, much less (if any) attention has been paid to the CBF part of the design. In almost all of the aforementioned works, no detailed analysis can be found on how certain parameters of the CBF function are selected. This issue is touched upon by some works using control barrier certificate (CBC) approach \cite{wang2023safety,schneeberger2023sos,lin2023secondary}. In these works, the synthesis of the control input and barrier certificate is done in a single Sum-Of-Squares (SOS) program. Although it can be observed in simulations that the choice of CBCs indeed affects the performance of the system, there is no analysis of how this is done. Moreover, the entire synthesis is done via an SOS program which generates a polynomial feedback control law that does not have the point-wise minimum norm properties of QP-based design.

In this work, we first modify the conventional definition of CBF proposed in \cite{ames2016control}. And we show that QP-based control design using the modified CBF does preserve all desirable properties of the QP-based design using standard CBFs. Then, we consider systems with polynomial dynamics and formulate the choice of CBF parameters as an SOS program. The thesis by Parillo \cite{parillo2000structured}, shows that an SOS program can be reformulated as a hierarchy of semidefinite programs by using SOS polynomials of increasing degree. Consequently, the selection of the CBF parameter to optimize a given performance index can be efficiently dealt with if we choose to work with polynomial functions.\linebreak

\vspace{-3mm}
\noindent
Our contributions are summarized below.

\noindent
\textbf{1)} We modify the standard definition of CBF to allow more flexibility in selecting CBFs that can potentially lead to better performance of the QP-based controller. The corresponding QP formulated using the modified CBF is shown to inherent all nice properties of the standard QP-based controllers, namely a locally Lipschitz control law, no undesirable equilibria other than the origin, and a locally asymptotically stable origin.

\noindent
\textbf{2)} By working with polynomial functions, we illustrate how the modified CBF that optimize some performance index can be found using SOS programming. To the best of our knowledge, this is the first work that attempt to investigate how CBF parameters can impact the closed loop of QP-based control systems.
%\end{enumerate}

The rest of this paper is organized as follows. We first present the preliminaries in Section \ref{pre}, followed by recalling the QP-based design for safety and stability in Section \ref{secQP}. Section \ref{sec-modi-CBF} presents a modified CBF condition whose design will be discussed in Section \ref{sec4}. Section \ref{sec5} illustrates the main results via numerical simulations. Lastly, conclusions and future research directions are given in Section \ref{sec6}.

\section{Preliminaries}\label{pre}
\subsection{Notations}
Let $\mathbb{R}=(-\infty,\infty)$, $\mathbb{R}_{\geq 0}=[0,\infty)$, $\mathbb{R}_{>0}=(0,\infty)$ and $\mathbb{R}^{n}$ denotes the $n$-dimensional Euclidean space. For $x\in\mathbb{R}^n$, $||x||$ denotes its Euclidean norm. A function $f(x)$ is said to be locally Lipschitz continuous at $x$ if there exist $L,\varepsilon\in\mathbb{R}_{>0}$ (that may depend on $x$) such that $||f(a)-f(b)||\leq L||a-b||$ whenever $||a-x||\leq\varepsilon$ and $||b-x||\leq\varepsilon$. Given a differentiable function $V(x)$ and a vector field $f(x)$, the notation $L_fV(x)$ stands for $\frac{\partial V}{\partial x}f(x)$. The interior and boundary of a set $\mathcal{C}\subset \mathbb{R}^n$ are denoted by $\mathrm{Int}\mathcal{C}$ and $\partial C$, respectively. A continuous function $\alpha: [0, a)\rightarrow[0,\infty)$ for $a\in\mathbb{R}_{>0}$ is a class $\mathcal{K}$ function, denoted by $\alpha\in\mathcal{K}$, if it is strictly increasing and $\alpha(0)=0$. A continuous function $\alpha: (-b,a)\rightarrow(-\infty,\infty)$ for $a,b\in\mathbb{R}_{>0}$ is an extended class $\mathcal{K}$ function, denoted by $\alpha\in\mathcal{K}_e$, if it is strictly increasing and $\alpha(0)=0$. For a given square matrix $R$, $\text{Tr}[R]$ denotes the trace of $R$. We use $A\succ 0$ ($A\prec 0$) and $A\succeq 0$ ($A\preceq 0$) to denote the matrix $A$ is positive (negative) definite and positive (negative) semidefinite, respectively. Given a polynomial function $p(x):\mathbb{R}^n\rightarrow\mathbb{R}$, $p$ is called SOS if there exist polynomials $p_i:\mathbb{R}^n\rightarrow\mathbb{R}$ such that $p(x) =\sum_{i=1}^{k}(p_i(x))^2$. The set of SOS polynomials and set of polynomials with real coefficients in $x$ are denoted by $\Sigma[x]$ and $\mathbb{R}[x]$, respectively. A vector of dimension $n$ composed of SOS (real) polynomial functions of $x$ is denoted by $\Sigma^n[x]$ ($\mathbb{R}^n[x]$).

\subsection{Preliminaries on Polynomial Functions}
A standard SOS program is a convex optimization problem of the following form \cite{blekherman2012semidefinite}
\begin{equation}\label{pre-sos}
    \begin{split}
        \underset{m}{\min}&\ b^\top m \text{ such that }\ p_i(x,m)\in\Sigma[x],\ i=1,2,\ldots,n,
        \end{split}
\end{equation}
where $p_i(x,m)=c_{i0}(x)+\sum_{j=1}^{k}c_{ij}(x)m_j$, $c_{ij}(x)\in\mathbb{R}[x]$, and $b$ is a given vector. It is shown in \cite[p. 74]{blekherman2012semidefinite} that (\ref{pre-sos}) is equivalent to a semidefinite program.

A useful tool that will be used extensively in this paper is the generalization of the S-procedure \cite{boyd1994linear} to polynomial functions. This can be done via the Positivstellensatz certificates of set containment \cite{bochnak1987geometrie}. The proof of the following key result can be found in \cite[Chapter 2.2]{jarvis2003lyapunov}.

\begin{lemma}\label{S-pro}
Given $p_0$, $p_1$, $\cdots$, $p_m\in\mathbb{R}[x]$, if there exist $\lambda_1$, $\lambda_2$, $\cdots$, $\lambda_m\in\Sigma[x]$ such that 
    $p_0-\sum_{i=1}^{m}\lambda_ip_i\in\Sigma[x]$, then we have $\overset{m}{\underset{i=1}\bigcap}\{x|p_i(x)\geq0\}\subseteq\{x|p_0(x)\geq0\}$.

\end{lemma}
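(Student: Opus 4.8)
The plan is to prove the set containment directly from the definition of SOS polynomials, exploiting the single fact that every element of $\Sigma[x]$ is pointwise nonnegative on all of $\mathbb{R}^n$. First I would fix an arbitrary point $x$ lying in the left-hand set, that is, an $x$ satisfying $p_i(x)\geq 0$ simultaneously for every $i=1,\ldots,m$, and aim to conclude that $p_0(x)\geq 0$, so that $x$ belongs to the right-hand set as well.

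The core observation is that membership in $\Sigma[x]$ is nothing but a global nonnegativity certificate: if $q=\sum_k (q_k)^2$ with each $q_k\in\mathbb{R}[x]$, then $q(y)=\sum_k (q_k(y))^2\geq 0$ for every $y\in\mathbb{R}^n$, since each summand is the square of a real number. Applying this to the hypothesis $p_0-\sum_{i=1}^m\lambda_i p_i\in\Sigma[x]$ evaluated at our chosen point yields $p_0(x)-\sum_{i=1}^m\lambda_i(x)p_i(x)\geq 0$, and applying the same observation to each multiplier $\lambda_i\in\Sigma[x]$ yields $\lambda_i(x)\geq 0$ for all $i$.

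It then remains only to combine these inequalities. Because $x$ lies in the intersection, we have $p_i(x)\geq 0$; together with $\lambda_i(x)\geq 0$ this makes each product $\lambda_i(x)p_i(x)$ nonnegative, and hence $\sum_{i=1}^m\lambda_i(x)p_i(x)\geq 0$. Chaining this with the certificate inequality above gives $p_0(x)\geq \sum_{i=1}^m\lambda_i(x)p_i(x)\geq 0$, so $x\in\{x\mid p_0(x)\geq 0\}$. Since $x$ was an arbitrary element of $\bigcap_{i=1}^m\{x\mid p_i(x)\geq 0\}$, the desired containment follows.

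I do not expect a genuine obstacle here: this is the ``easy'' (sufficiency) direction of the generalized S-procedure, and it rests entirely on the sign of SOS polynomials combined with the assumed nonnegativity of the $p_i$ on the intersection. The substantive content — namely that such multipliers $\lambda_i$ are guaranteed to exist whenever the containment actually holds — is a converse statement, and it is precisely that converse which requires the Positivstellensatz machinery cited just before the statement; it plays no role in establishing the implication asserted in this lemma.
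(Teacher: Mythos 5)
Your proof is correct. The paper itself does not prove Lemma~\ref{S-pro} --- it only cites \cite[Chapter 2.2]{jarvis2003lyapunov} --- and your argument is exactly the standard one for this (sufficiency) direction: evaluate the SOS certificate at an arbitrary point of the intersection, use pointwise nonnegativity of SOS polynomials for both the certificate and the multipliers $\lambda_i$, and chain the inequalities to get $p_0(x)\geq\sum_{i}\lambda_i(x)p_i(x)\geq 0$. Your closing remark is also on target: the Positivstellensatz machinery mentioned before the lemma is only needed for the converse (existence of multipliers), which the lemma does not assert.
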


\subsection{Safety for Dynamical Systems}
We consider the following nonlinear control affine system
\begin{equation}\label{plt}
\begin{split}
    \dot{x}&=f(x)+g(x)u,
    \end{split}
\end{equation}
where $x\in\mathbb{R}^{n}$ is the state vector, and $u\in\mathbb{R}^{m}$ is the input. The functions $f: \mathbb{R}^{n}\rightarrow\mathbb{R}^{n}$ and $g: \mathbb{R}^{n} \rightarrow \mathbb{R}^{n} \times \mathbb{R}^{m}$ are assumed to be locally Lipschitz continuous in $x$ and $f(0)=0$. From standard theory of nonlinear systems \cite{khalil2002nonlinear}, we know that, if $u$ is also Lipschitz in $x$, then for any initial condition $x_0\in\mathbb{R}^n$, there exists a maximal time interval $I(x_0)\subseteq\mathbb{R}$ where \eqref{plt} has a unique solution. A set $\mathcal{S}\subseteq\mathbb{R}^n$ is called \textit{(forward) invariant} with respect to \eqref{plt} if for every $x_0\in\mathcal{S}$, $x(t)\in\mathcal{S}$ for all $t\in I(x_0)$. 

Consider a safe set defined by
\begin{equation}
    \label{def-safe-set}
    \mathcal{S}=\{x\in\mathbb{R}^n: h(x)\geq 0\},    
\end{equation}
for some smooth function $h(x):\mathbb{R}^n\rightarrow \mathbb{R}$. The system \eqref{plt} is called \textit{safe}, if $\mathcal{S}$ is forward invariant. The following is a definition of (zeroing) CBFs \cite{ames2016control} that give sufficient conditions on the input $u$ to ensure safety. 
\begin{definition}\label{def-CBF}
    Consider the safe set $\mathcal{S}$ defined by \eqref{def-safe-set}, $h(x)$ is a control barrier function for system \eqref{plt} if there exists a locally Lipschitz function $\alpha\in\mathcal{K}_e$ such that:
    \begin{equation}
        \label{eq-CBF}
        \underset{u\in\mathbb{R}^m}{\sup}[L_fh(x)+L_gh(x)u+\alpha(h(x))]\geq 0,\ \forall x\in \mathbb{R}^n.
    \end{equation}
\end{definition}
Note that, Definition \ref{def-CBF} requires the CBF condition to hold globally. This allows initial conditions outside the safe set to eventually converge to $\mathcal{S}$ with a properly designed control input. All results stated in this paper remain true if $h(x)$ is defined only over an open subset of $\mathbb{R}^n$ or reciprocal CBF is used (see \cite[Definition 4]{ames2016control}).

In some scenarios, in addition to safety, asymptotic stability of the origin is also required. This motivates the definition of CLF.  
\begin{definition}\label{def-CLF}
    A smooth positive definite function $V:\mathbb{R}^n\rightarrow\mathbb{R}_{\geq 0}$ is a control Lyapunov function (CLF) for \eqref{plt} if there exists a locally Lipschitz $\gamma\in\mathcal{K}$ such that:
    \begin{equation}
        \label{eq-CLF}
        \underset{u\in\mathbb{R}^m}{\inf}[L_fV(x)+L_gV(x)u+\gamma(V(x))]\leq 0,\ \forall x\in \mathbb{R}^n.
    \end{equation}
\end{definition}
The following assumption is made throughout the paper.
\begin{assumption}
    \label{assumption-basic}
    There exist a CLF $V(x)$ and a CBF $h(x)$ for system \eqref{plt}. Furthermore, the origin is in the interior of $\mathcal{S}$, i.e. $0\in\mathrm{Int}\mathcal{S}$.
\end{assumption}

\section{QPs for safety and stability}\label{secQP}
Ideally, we want to design a state feedback control law $u(x)$ that is locally Lipschitz\footnote{A locally Lipschitz feedback control law $u(x)$ ensures that the closed loop system of \eqref{plt} is also locally Lipschitz continuous, such that a unique solution to $\dot{x}=f(x)+g(x)u(x)$ exists for any initial condition $x_0$ within a maximal time interval $I(x_0) \subseteq \mathbb{R}$. } in $x$ such that $\mathcal{S}$ is forward invariant and the origin is globally asymptotically stable. However, this might be impossible for general control affine systems and CBF $h(x)$. In \cite{ames2016control}, a QP-based formulation is proposed which relaxes the CLF condition to ensure feasibility of the QP:
\begin{equation}
    \label{QP-Ames}
    \begin{split}
        &\underset{(u,\delta)\in \mathbb{R}^{m+1}}{\text{min}}\ ||u||^2+p\delta^2\\
         \text{s.t.}\ &L_fh(x)+L_gh(x)u+\alpha(h(x))\geq 0,\qquad (\mathrm{CBF})\\
         \ &L_fV(x)+L_gV(x)u+\gamma(V(x))\leq \delta.\qquad (\mathrm{CLF})\\
    \end{split}
\end{equation}
The slack variable $\delta>0$ makes the CLF condition a soft constraint, which is penalized by the tuning parameter $p>0$. It is shown in \cite{ames2016control} that, the QP \eqref{QP-Ames} is always feasible under Assumption \ref{assumption-basic}. Moreover, under extra assumptions, the resulting $u(x)$ is locally Lipschitz in $x$. However, as pointed out by \cite{tan2024undesired}, the QP \eqref{QP-Ames} might induce some undesired equilibria in $\partial \mathcal{S}$ and $\mathrm{Int}\mathcal{S}$ depending on the parameter $p>0$. Moreover, due to the introduction of the relaxation variable $\delta$, it is shown in \cite{jankovic2018robust} that for no value of $p$ is the controller induced by \eqref{QP-Ames} guaranteed to achieve local asymptotic stability of the origin. To address stability issues of the QP-based controller \eqref{QP-Ames}, a modified QP is proposed in \cite{tan2024undesired}. Suppose a locally Lipschitz nominal controller $u_{\mathrm{nom}}(x)$ exists. We want to design $u'(x)$ such that 
\begin{equation}
    \label{u-total}
    u(x)=u_{\mathrm{nom}}(x)+u'(x)
\end{equation}
is the overall input. The system \eqref{plt} can be equivalently written as
\begin{equation}\label{plt-modi}
\begin{split}
    \dot{x}&=f'(x)+g(x)u',
    \end{split}
\end{equation}
where $f'(x)=f(x)+g(x)u_{\mathrm{nom}}(x)$. The input $u'$ is then generated by the following QP,
\begin{equation}
    \label{QP-Tan}
    \begin{split}
        &\underset{(u',\delta)\in \mathbb{R}^{m+1}}{\text{min}}\ ||u'||^2+p\delta^2\\
         \text{s.t.}\ &L_{f'}h(x)+L_gh(x)u'+\alpha(h(x))\geq 0,\qquad (\mathrm{CBF})\\
         \ &L_{f'}V(x)+L_gV(x)u'+\gamma(V(x))\leq \delta.\qquad (\mathrm{CLF})\\
    \end{split}
\end{equation}
Define $F_h'(x)=L_{f'}h(x)+\alpha(h(x))$ and $F_V'(x)=L_{f'}V(x)+\gamma(h(x))$. Proposition 4 and Theorem 3 of \cite{tan2024undesired} directly lead to the following result.
\begin{proposition}
    \label{prop-tan}
    Consider system \eqref{plt} with a CLF $V$ and a CBF $h$ with its associated safe set $\mathcal{S}$. Assume one of the following conditions holds: (i) $L_gh(x)\neq0$ for all $x$; or (ii) $\{x\in\mathbb{R}^n:F'_h(x)=0, L_gh(x)=0\}$ is empty. Let the nominal control $u_{\mathrm{nom}}$ satisfy the CLF condition \eqref{eq-CLF}, and the control input in \eqref{u-total} is applied to \eqref{plt} with $u'$ being the solution to \eqref{QP-Tan}, then
    \begin{enumerate}
        \item the control input $u$ in \eqref{u-total} is locally Lipschitz continuous;
        \item the set $\mathcal{S}$ is forward invariant;
        \item the origin is the only equilibrium in $\mathrm{Int}\mathcal{S}$;
        \item no equilibrium exists in $\partial\mathcal{S}$ with $L_gh=0$;
        \item the origin is locally asymptotically stable.
    \end{enumerate}
\end{proposition}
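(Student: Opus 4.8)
The plan is to prove the result as a direct corollary of Proposition~4 and Theorem~3 of \cite{tan2024undesired}, by recognizing that the system \eqref{plt-modi} driven by $u'$ is itself a control-affine system of exactly the form treated in \cite{tan2024undesired}, and that $V$ and $h$ remain a valid CLF/CBF pair for it. First I would record that, since $u_{\mathrm{nom}}$ is locally Lipschitz, the transformed drift $f'(x)=f(x)+g(x)u_{\mathrm{nom}}(x)$ is locally Lipschitz, so that \eqref{plt-modi} satisfies the same standing regularity hypotheses as \eqref{plt}. The key observation is that the change of variable $u=u_{\mathrm{nom}}+u'$ is, for each fixed $x$, a bijection of $\mathbb{R}^m$ onto itself, and that $L_{f'}h=L_fh+L_gh\,u_{\mathrm{nom}}$ (and similarly for $V$). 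Hence $\sup_{u'}[L_{f'}h+L_gh\,u'+\alpha(h)]=\sup_{u}[L_fh+L_gh\,u+\alpha(h)]\ge 0$, so the CBF condition \eqref{eq-CBF} transfers verbatim from $f$ to $f'$; the analogous identity shows $V$ is a CLF for \eqref{plt-modi}. Thus the hypotheses of \cite{tan2024undesired} on the existence of a CLF/CBF pair are met by \eqref{plt-modi}.

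Next I would check the remaining hypotheses. The regularity alternatives (i) $L_gh(x)\neq 0$ for all $x$ and (ii) $\{x:F'_h(x)=0,\ L_gh(x)=0\}=\varnothing$ are precisely the conditions guaranteeing that the CBF constraint in \eqref{QP-Tan} is non-degenerate, so that the QP optimizer $u'(x)$ is locally Lipschitz; this yields claim~1) after adding the locally Lipschitz $u_{\mathrm{nom}}$, since a sum of locally Lipschitz maps is locally Lipschitz. For claim~5) I would use that $u_{\mathrm{nom}}$ satisfies the CLF condition, i.e. $F'_V(x)=L_{f'}V(x)+\gamma(V(x))\le 0$ on a neighborhood of the origin; this means $(u',\delta)=(0,0)$ is feasible for the constraints of \eqref{QP-Tan} there, so the slack is inactive and the CLF decrease is recovered without relaxation, which is exactly the mechanism by which Theorem~3 of \cite{tan2024undesired} certifies local asymptotic stability.

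Finally I would observe that claims~2), 3) and 4) are statements about the closed-loop trajectories and equilibria, and that the closed-loop vector field obtained by substituting the optimizer of \eqref{QP-Tan} into \eqref{plt-modi} is identical to the one obtained by substituting $u=u_{\mathrm{nom}}+u'$ into \eqref{plt}. Therefore forward invariance of $\mathcal{S}$, the absence of equilibria in $\mathrm{Int}\,\mathcal{S}$ other than the origin, and the absence of equilibria on $\partial\mathcal{S}$ where $L_gh=0$ follow directly from the corresponding conclusions of \cite{tan2024undesired} applied to \eqref{plt-modi}. The main obstacle I anticipate is bookkeeping rather than conceptual: one must verify carefully that the CLF-satisfying property of $u_{\mathrm{nom}}$ holds at least on a neighborhood of the origin so that the stability argument of Theorem~3 applies, and that the non-degeneracy alternatives (i)/(ii) are exactly what \cite{tan2024undesired} require for Lipschitz continuity of the optimizer; once the equivalence of the two representations is established, these reduce to citing the source results.
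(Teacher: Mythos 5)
Your proposal is correct and takes essentially the same route as the paper: the paper offers no proof beyond the one-line statement that Proposition~4 and Theorem~3 of \cite{tan2024undesired} directly yield the result, and your argument is precisely that citation, supplemented with the (sound) bookkeeping showing why it applies --- namely that $f'=f+gu_{\mathrm{nom}}$ inherits local Lipschitz continuity, that the CBF/CLF conditions transfer to \eqref{plt-modi} via the bijection $u=u_{\mathrm{nom}}+u'$, and that conditions (i)/(ii) are the non-degeneracy hypotheses required there. The extra verification you supply is detail the paper leaves implicit, not a different method.
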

In this work, we choose to work with the QP formulation proposed in \cite{tan2024undesired} mainly for the fact that Proposition \ref{prop-tan} successfully addresses many issues existing in the original QP by \cite{ames2016control}. We want to emphasize that, the discussions in the rest of the paper can be applied to other (both zeroing and reciprocal) CBF based QPs.  

\section{A modified CBF condition}\label{sec-modi-CBF}
As shown in \cite{khalil2002nonlinear}, for any positive definite function $W(x):\mathbb{R}^n\rightarrow \mathbb{R}_{\geq0}$, there exist $\gamma_1,\gamma_2\in\mathcal{K}$ such that
\begin{equation*}
    \gamma_1(||x||)\leq W(x) \leq \gamma_2(||x||).
\end{equation*}
As a result, the class $\mathcal{K}$ function in the CLF condition \eqref{eq-CLF} can be replaced by any positive definite function $W(x)$ which is not necessarily an increasing function of $x$ \cite{freeman2008robust}. It is natural to extend the above observation and replace $\alpha\in\mathcal{K}_e$ used in the CBF condition \eqref{eq-CBF} with a function $U(h(x))$ that is not necessarily increasing.

To the best of our knowledge, most existing works including \cite{ames2016control,ames2019control,jankovic2018robust,tan2024undesired,breeden2022control} consider linear functions of $h(x)$ as the extended class $\mathcal{K}$ function in the CBF condition \eqref{eq-CBF}, i.e. $\alpha(h(x))=\lambda h(x)$ for some $\lambda>0$. Notable exceptions include \cite{schneeberger2023sos,wang2023safety,lin2023secondary} where SOS tools are used to find such functions. However, none of the aforementioned works give results on how the use of a nonlinear function of $h(x)$ can improve the design.

\begin{example}
    \label{exp-CBC}
    It is shown in \cite{wang2023safety} that, a sufficient condition to guarantee safety of $\mathcal{S}$ is that
    \begin{equation}\label{eq-CBC}
    \underset{u\in \mathbb{R}^m}{\sup}[L_fh(x)+L_gh(x)u]\geq 0, \forall x\ \mathrm{such\ that}\  h(x)\leq0.
\end{equation}
    The function $h(x)$ associated with $\mathcal{S}$ is known as the control barrier certificate (CBC) for \eqref{plt}. Suppose for system \eqref{plt}, $f(x)$, $g(x)$, $h(x)$, and $u(x)$ are polynomial functions of $x$ with real coefficients. Then, we can apply Lemma \ref{S-pro} to \eqref{eq-CBC} which leads to the following condition,
    \begin{equation}
        \label{eq-CBC-SOS}
        L_fh(x)+L_gh(x)u+\lambda(x)(h(x))\in\Sigma[x],
    \end{equation}
    for some $\lambda(x)\in\Sigma[x]$. Note that the term $\lambda(x)(h(x))$ in general cannot be represented as $\alpha(h(x))$ for some $\alpha\in\mathcal{K}_e$.  
\end{example}
Motivated by the above example, in this work we focus on $U(h(x))$ taking the form $U(h(x))=\lambda(x)h(x)$, where $\lambda(x):\mathbb{R}^n\rightarrow\mathbb{R}_{\geq0}$ is a locally Lipschitz function of $x$ and $\lambda(x)=0$ only if $h(x)=0$. We will see in the subsequent section that this choice leads to improved region of attraction and simplifies the search for a CBF. To this end, the modified CBF condition is given by
\begin{equation}
        \label{eq-CBF-modi}
        \underset{u\in\mathbb{R}^m}{\sup}[L_fh(x)+L_gh(x)u+\lambda(x)h(x)]\geq 0,\ \forall x\in \mathbb{R}^n.
    \end{equation}
Based on \eqref{eq-CBF-modi}, we modify the QP \eqref{QP-Tan} accordingly as follows
\begin{equation}
    \label{QP-Tan-modi}
    \begin{split}
        &\underset{(u',\delta)\in \mathbb{R}^{m+1}}{\text{min}}\ ||u'||^2+p\delta^2\\
         \text{s.t.}\ &L_{f'}h(x)+L_gh(x)u'+\lambda(x)h(x)\geq 0,\qquad (\mathrm{CBF})\\
         \ &L_{f'}V(x)+L_gV(x)u'+\gamma(V(x))\leq \delta.\qquad (\mathrm{CLF})\\
    \end{split}
\end{equation}
It can be seen that if $\lambda(x)=\lambda$, then \eqref{QP-Tan-modi} reduces to \eqref{QP-Tan} with $\alpha(r)=\lambda r$ for $r\in\mathbb{R}$ and we recover the standard choice of the extended class $\mathcal{K}$ function that is used in practice. 

We will conclude this section with our first main result, by showing that the modified CBF condition \eqref{eq-CBF-modi} and the corresponding QP \eqref{QP-Tan-modi} lead to desirable controller properties while guaranteeing safety and stability. First, we write down the solution to the QP \eqref{QP-Tan-modi}, whose detailed derivation can be found in \cite[Theorem 1]{tan2024undesired}. 
\begin{lemma}
    \label{lem-sln-QP}
    The solution to the QP \eqref{QP-Tan-modi} is given by
    \begin{equation}
        \label{sln-u}
        u'(x)=\begin{cases}
      0, & x\in \Omega_{\overline{\mathrm{cbf}}}^{\overline{\mathrm{clf}}}\cup\Omega_{\mathrm{cbf},1}^{\overline{\mathrm{clf}}},\\
      -\frac{F'_\lambda b_1^T}{||b_1||^2}, & x\in \Omega_{\mathrm{cbf},2}^{\overline{\mathrm{clf}}},\\
      -\frac{F'_{V} b_2^T}{1/p+||b_2||^2}, & x\in \Omega_{\overline{\mathrm{cbf}}}^{\mathrm{clf}}\cup\Omega_{\mathrm{cbf},1}^{\mathrm{clf}},\\
      -v_1b_2^T+v_2b_1^T, & x\in \Omega_{\mathrm{cbf},2}^{\mathrm{clf}},\\
    \end{cases}
    \end{equation}
    where $F_\lambda'(x)=L_{f'}h(x)+\lambda(x)h(x)$, $F_V'(x)=L_{f'}V(x)+\gamma(h(x))$, $b_1(x)=L_gh(x)$, $b_2(x)=L_gV(x)$, $\begin{bmatrix} v_1 \\ v_2 \end{bmatrix} = \begin{bmatrix} 1/p+||b_2||^2  & -b_2b_1^T \\
-b_2b_1^T & ||b_1||^2
 \end{bmatrix}^{-1}   \begin{bmatrix} F_V' \\ -F_\lambda' \end{bmatrix}$, and the domain sets are given by
 \begin{equation*}     \Omega_{\overline{\mathrm{cbf}}}^{\overline{\mathrm{clf}}}:=\{x\in\mathbb{R}^n:F'_V<0,F'_\lambda>0\},
 \end{equation*}
 \begin{equation*}     \Omega_{\mathrm{cbf},1}^{\overline{\mathrm{clf}}}:=\{x\in\mathbb{R}^n:F'_V<0,F'_\lambda=0,b_1=0\},
 \end{equation*}
 \begin{equation*}     \Omega_{\mathrm{cbf},2}^{\overline{\mathrm{clf}}}:=\{x\in\mathbb{R}^n:F'_\lambda\leq0,F'_V||b_1||^2<F'_\lambda b_2b_1^T\},
 \end{equation*}
 \begin{equation*}
     \Omega_{\overline{\mathrm{cbf}}}^{\mathrm{clf}}:=\{x\in\mathbb{R}^n:F'_V\geq0,F'_Vb_1b_2^T<F'_\lambda (1/p+||b_2||^2)\},
 \end{equation*}
 \begin{equation*}
     \Omega_{\mathrm{cbf},1}^{\mathrm{clf}}:=\{x\in\mathbb{R}^n:F'_V\geq0,F'_\lambda=0,b_1=0\},
 \end{equation*}
 \begin{equation*}
 \begin{split}
     \Omega_{\mathrm{cbf},2}^{\mathrm{clf}}:=\{x\in&\mathbb{R}^n:F'_V||b_1||^2\geq F'_\lambda b_2b_1^T,\\
     &F'_Vb_1b_2^T\geq F'_\lambda (1/p+||b_2||^2),b_1\neq0\}.
 \end{split} 
 \end{equation*}
\end{lemma}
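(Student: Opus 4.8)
The plan is to treat \eqref{QP-Tan-modi} as a strictly convex quadratic program in the decision variables $(u',\delta)$ and to characterize its unique minimizer through the Karush--Kuhn--Tucker (KKT) conditions. The objective $\|u'\|^2+p\delta^2$ has Hessian $2\,\mathrm{diag}(I_m,p)\succ0$ for $p>0$, so the problem is strictly convex; since both constraints are affine in $(u',\delta)$, the linearity constraint qualification holds and the KKT conditions are necessary and sufficient for the unique optimum. I would first record that the program is always feasible: the slack $\delta$ can be taken arbitrarily large to satisfy the CLF constraint, while the modified CBF condition \eqref{eq-CBF-modi} guarantees a $u'$ meeting the CBF constraint (in particular $F'_\lambda\geq0$ whenever $b_1=L_gh=0$). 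Observe also that this QP has exactly the same algebraic form as \eqref{QP-Tan} from \cite{tan2024undesired}, with the single replacement of $L_{f'}h+\alpha(h)$ by $F'_\lambda=L_{f'}h+\lambda(x)h$; hence the derivation in \cite[Theorem 1]{tan2024undesired} transfers verbatim upon this substitution.

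Next I would form the Lagrangian with multipliers $\mu_1,\mu_2\geq0$ for the CBF and CLF constraints and impose stationarity, obtaining $u'=\tfrac12(\mu_1 b_1^\top-\mu_2 b_2^\top)$ and $\delta=\mu_2/(2p)$, together with the complementary slackness relations $\mu_1(F'_\lambda+b_1u')=0$ and $\mu_2(F'_V+b_2u'-\delta)=0$. The proof then proceeds by a case analysis on which constraints are active, i.e.\ on the support of $(\mu_1,\mu_2)$. When both multipliers vanish the minimizer is $u'=0$, $\delta=0$, feasible precisely when $F'_\lambda\geq0$ and $F'_V\leq0$, giving the regions $\Omega_{\overline{\mathrm{cbf}}}^{\overline{\mathrm{clf}}}$ and $\Omega_{\mathrm{cbf},1}^{\overline{\mathrm{clf}}}$. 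Activating only the CBF constraint and solving $b_1u'=-F'_\lambda$ for $\mu_1$ produces $u'=-F'_\lambda b_1^\top/\|b_1\|^2$, while activating only the CLF constraint gives $u'=-F'_Vb_2^\top/(1/p+\|b_2\|^2)$; in each single-active case I would translate the sign requirement $\mu_i\geq0$ and the remaining primal-feasibility inequality into the defining inequalities of $\Omega_{\mathrm{cbf},2}^{\overline{\mathrm{clf}}}$ and $\Omega_{\overline{\mathrm{cbf}}}^{\mathrm{clf}}$, respectively.

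The most involved case is when both constraints are active. Setting both to equality and writing $v_2=\mu_1/2$, $v_1=\mu_2/2$ and eliminating $\delta$ recasts the system, using $b_1b_2^\top=b_2b_1^\top$, into the symmetric $2\times2$ system whose solution is the stated $\begin{bmatrix}v_1&v_2\end{bmatrix}^\top$, so that $u'=-v_1b_2^\top+v_2b_1^\top$. I would then verify by Cauchy--Schwarz that the coefficient matrix $M$ has $\det M=(1/p+\|b_2\|^2)\|b_1\|^2-(b_2b_1^\top)^2\geq\|b_1\|^2/p>0$ whenever $b_1\neq0$, so $M$ is invertible and, by Cramer's rule, $v_1\geq0\iff F'_V\|b_1\|^2\geq F'_\lambda b_2b_1^\top$ and $v_2\geq0\iff F'_Vb_1b_2^\top\geq F'_\lambda(1/p+\|b_2\|^2)$, which are exactly the inequalities defining $\Omega_{\mathrm{cbf},2}^{\mathrm{clf}}$. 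The main obstacle, and the place requiring genuine care, is the degeneracy $b_1=0$, where $M$ becomes singular and the two-active branch collapses: there the CBF constraint reduces to the $u'$-independent condition $F'_\lambda\geq0$, only the CLF constraint governs $u'$, and one recovers the $\Omega_{\mathrm{cbf},1}^{\overline{\mathrm{clf}}}$ and $\Omega_{\mathrm{cbf},1}^{\mathrm{clf}}$ branches. I would finish by checking that the listed regions cover $\mathbb{R}^n$ and that their pairwise overlaps carry a consistent value of $u'$, so that the piecewise formula \eqref{sln-u} is well defined.
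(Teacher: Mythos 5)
Your proposal is correct, and it is more than the paper itself provides: the paper does not prove Lemma~\ref{lem-sln-QP} at all, but simply points to \cite[Theorem 1]{tan2024undesired} for the ``detailed derivation,'' implicitly relying on the fact that replacing $\alpha(h(x))$ by $\lambda(x)h(x)$ only changes the constant term $F'_\lambda$ in the CBF constraint and hence leaves the pointwise algebraic structure of the QP untouched. You make exactly this transfer argument explicit (which is the one-line justification the paper leans on), and then go further by reconstructing the KKT derivation that the citation hides: strict convexity plus affine constraints giving necessity and sufficiency of KKT, stationarity yielding $u'=\tfrac12(\mu_1 b_1^\top-\mu_2 b_2^\top)$ and $\delta=\mu_2/(2p)$, and the four-way case split on the multiplier supports. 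Your computations check out: in the doubly active case the identification $v_2=\mu_1/2$, $v_1=\mu_2/2$ reproduces the stated symmetric system, the Cauchy--Schwarz bound $\det M\geq \|b_1\|^2/p>0$ for $b_1\neq 0$ is right, and Cramer's rule gives precisely the two inequalities defining $\Omega_{\mathrm{cbf},2}^{\mathrm{clf}}$ as the dual-feasibility conditions $\mu_1,\mu_2\geq0$; likewise the single-active cases produce the stated formulas with the correct sign/feasibility conditions, and you correctly isolate $b_1=0$ (where feasibility requires $F'_\lambda\geq0$, guaranteed by \eqref{eq-CBF-modi}) as the degenerate branch behind the ``$\mathrm{cbf},1$'' regions. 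The only parts left as intentions rather than executed steps are the coverage of $\mathbb{R}^n$ by the six regions and consistency on their overlaps; these are routine but not entirely trivial (coverage in the $b_1\neq0$ case again uses Cauchy--Schwarz), so if you write this up in full, carry them out. In short: the paper buys brevity by citation; your route buys a self-contained statement whose region definitions and sign conventions are independently verified rather than inherited.
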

The notations are adopted from \cite{tan2024undesired} with a bar meaning inactivity of the corresponding constraint. Moreover, cbf,1 refers to the case when the CBF constraint is active and $b_1=0$, while cbf,2 means the CBF constraint is active and $b_1\neq 0$.

The first main result of the paper is given as follows.
\begin{theorem}
    \label{thm-CBF-modi}
    Consider system \eqref{plt} with a CLF $V$ and a continuously differentiable function $h$ with its associated safe set $\mathcal{S}$ that satisfies \eqref{eq-CBF-modi}. Assume one of the following conditions holds: (i) $L_gh(x)\neq0$ for all $x$; or (ii) the set $\{x\in\mathbb{R}^n:F_{\lambda}'(x)=0, L_gh(x)=0\}$ is empty. Let the nominal control $u_{\mathrm{nom}}$ satisfy the CLF condition \eqref{eq-CLF}, and the control input in \eqref{u-total} is applied to \eqref{plt} with $u'$ being the solution to \eqref{QP-Tan-modi}, then
    \begin{enumerate}
        \item the control input $u$ in \eqref{u-total} is locally Lipschitz continuous;
        \item the set $\mathcal{S}$ is forward invariant;
        \item the origin is the only equilibrium in $\mathrm{Int}\mathcal{S}$;
        \item no equilibrium exists in $\partial\mathcal{S}$ with $L_gh=0$;
        \item the origin is locally asymptotically stable.
    \end{enumerate}
\end{theorem}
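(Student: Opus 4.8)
The plan is to reduce Theorem \ref{thm-CBF-modi} to Proposition \ref{prop-tan} (that is, to \cite[Proposition 4 and Theorem 3]{tan2024undesired}) by showing that the substitution of $\alpha(h(x))$ by $\lambda(x)h(x)$ preserves every property of the CBF term that those proofs actually exploit. First I would isolate the three structural facts about $\lambda(x)h(x)$ on which everything rests: (a) it is locally Lipschitz in $x$, being the product of the locally Lipschitz functions $\lambda$ and $h$, so $F'_\lambda = L_{f'}h + \lambda h$ inherits the regularity that $F'_h$ had in \cite{tan2024undesired}; (b) it is sign-definite relative to $\mathcal{S}$, namely positive on $\mathrm{Int}\mathcal{S}$, zero on $\partial\mathcal{S}$, and negative outside $\mathcal{S}$ --- this is exactly where the hypothesis ``$\lambda(x) \geq 0$ and $\lambda(x) = 0$ only if $h(x) = 0$'' enters, since it forces $\lambda(x) > 0$ whenever $h(x) \neq 0$; and (c) it vanishes on $\partial\mathcal{S}$, where $h(x) = 0$. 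These are precisely the properties of $\alpha(h(x))$ for $\alpha \in \mathcal{K}_e$ used in \cite{tan2024undesired}; in particular the monotonicity of $\alpha$ in $h$ is never needed, only sign-definiteness, so $F'_\lambda$ can play the role formerly played by $F'_h$.

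With this dictionary fixed, I would establish the five claims in order. For claim 1 I would read off the explicit control law from Lemma \ref{lem-sln-QP}: on each open region the formula for $u'$ is a rational expression in locally Lipschitz quantities with nonvanishing denominator, hence locally Lipschitz, and the denominators are kept away from zero wherever the CBF constraint is active by case hypothesis (i) $L_gh \neq 0$ or (ii) emptiness of $\{F'_\lambda = 0,\,L_gh = 0\}$. The remaining work is to verify that the pieces agree and remain Lipschitz across the shared boundaries (where $F'_\lambda = 0$, $F'_V = 0$, or $b_1 = 0$); this matching is identical to \cite{tan2024undesired} once $F'_\lambda$ is known to be locally Lipschitz, which is property (a). Adding the locally Lipschitz $u_{\mathrm{nom}}$ preserves Lipschitzness and yields claim 1, which in turn guarantees existence and uniqueness of the closed-loop solutions presupposed by the remaining claims. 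For claim 2 I would argue directly: the solution always satisfies the CBF constraint, so along trajectories $\dot h = L_{f'}h + L_gh\,u' \geq -\lambda(x)h$; writing $w(t) = h(x(t))$ and $\mu(t) = \lambda(x(t)) \geq 0$, the map $t \mapsto w(t)\exp\!\big(\int_0^t \mu(s)\,ds\big)$ is nondecreasing, so $w(0) \geq 0$ forces $w(t) \geq 0$ on $I(x_0)$. This is the comparison-lemma form of Nagumo's condition and uses only property (b) through $\lambda \geq 0$.

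For claims 3 and 4 I would partition the analysis over the regions of Lemma \ref{lem-sln-QP} exactly as in \cite{tan2024undesired}. In $\mathrm{Int}\mathcal{S}$ one has $h(x) > 0$, hence $\lambda(x) > 0$ by property (b); the argument excluding nonzero interior equilibria --- which relies on the CLF term forcing strict decrease of $V$ away from the origin together with $F'_\lambda > 0$ preventing the CBF constraint from spuriously vanishing --- therefore transfers verbatim. On $\partial\mathcal{S}$ with $L_gh = 0$ property (c) gives $\lambda(x)h(x) = 0$, so $F'_\lambda = L_{f'}h$, and the modified CBF condition \eqref{eq-CBF-modi} together with hypothesis (i) or (ii) rules out equilibria there. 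For claim 5 I would use that $h(0) > 0$ keeps the CBF constraint inactive on a neighborhood of the origin, so there the controller collapses to the CLF-only branch of Lemma \ref{lem-sln-QP}; combined with the positive definiteness of $V$ and the vanishing of its gradient at the origin, $V$ is a strict local Lyapunov function, exactly as in \cite{tan2024undesired}, giving local asymptotic stability.

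The step I expect to be the main obstacle is the equilibrium and stability analysis of claims 3--5, since it demands the full region-by-region bookkeeping of Lemma \ref{lem-sln-QP} and a careful audit that every inequality invoked in \cite{tan2024undesired} depended on the CBF term only through its sign relative to $\mathcal{S}$ and its Lipschitzness, and never through monotonicity or through being a function of $h$ alone. This audit is the real content of the proof: once it is granted that $\lambda(x)h(x)$ reproduces the sign pattern and regularity of $\alpha(h(x))$, the remaining estimates coincide with those already carried out in \cite{tan2024undesired}.
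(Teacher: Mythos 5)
Your proposal is correct and, on items 1 and 3--5, takes essentially the paper's route: verify that $F'_\lambda$, $F'_V$, $b_1$, $b_2$ are locally Lipschitz and invoke \cite[Proposition 4]{tan2024undesired} for item 1; then observe that the modified and standard QPs generate the same control law wherever it matters --- at any candidate interior equilibrium the CBF constraint is inactive, on $\partial\mathcal{S}$ one has $\lambda(x)h(x)=\alpha(h(x))=0$, and near the origin $F'_\lambda(0)=\lambda(0)h(0)>0$ --- so items 3--5 transfer from Proposition \ref{prop-tan}. Where you genuinely diverge is item 2: the paper manufactures a class $\mathcal{K}_e$ majorant of $\lambda(x)h(x)$ via $\mu(r)=\sup_{\{x:|h(x)|\le r\}}\lambda(x)$ and then appeals to Nagumo's theorem, while you prove invariance directly with the integrating factor $t\mapsto h(x(t))\exp\bigl(\int_0^t\lambda(x(s))\,ds\bigr)$, which is nondecreasing along closed-loop solutions. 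Your argument is more self-contained and arguably sounder: the paper's $\mu(r)$ may be $+\infty$ when $\lambda$ is unbounded on the (generally unbounded) set $\{x:|h(x)|\le r\}$, and its intermediate function $\tilde\alpha$ conflates the scalar argument $r$ with the state $x$, whereas your Gr\"onwall-type comparison needs only $\lambda\ge 0$ and continuity along the trajectory. One point to tighten: in item 5 you justify inactivity of the CBF constraint near the origin by ``$h(0)>0$'' alone; you also need $L_{f'}h(0)=0$, which holds because the CLF condition on $u_{\mathrm{nom}}$ forces $f'(0)=0$ --- exactly the step the paper makes explicit before concluding $F'_\lambda(0)=\lambda(0)h(0)>0$.
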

\begin{proof}
    1) First, note that $V(x)$ and $h(x)$ are smooth functions of $x$. Moreover, $f(x)$, $g(x)$, $\lambda(x)$, and $\gamma(x)$ are locally Lipschitz. As a result, $F'_\lambda(x)$, $F'_V(x)$, $b_1(x)$, and $b_2(x)$ are all locally Lipschitz in $x$. By \cite[Proposition 4]{tan2024undesired}, $u'(x)$ generated by \eqref{QP-Tan-modi} is locally Lipschitz which means $u(x)=u_{\mathrm{nom}}(x)+u'(x)$ is also locally Lipschitz.
    
    2) By item 1), the closed loop system \eqref{plt-modi} admits a unique solution. Define the following function
    \begin{equation*}
        \label{construct-K}
        \tilde{\alpha}(r)=\mu(r) h(x),
    \end{equation*}
    where 
    \begin{equation*}
        \mu(r)=\underset{\{x\in\mathbb{R}^n:||h(x)||\leq r\}}{\sup} \lambda(x).
    \end{equation*}
    It can be seen that $\tilde{\alpha}(0)=0$ and $\mu(r)$ is non-decreasing. Thus there always exists $\alpha\in\mathcal{K}_e$ such that $\alpha(h(x))\geq \tilde{\alpha}(r)\geq \lambda(x)h(x)$. Consequently, the CBF condition in \eqref{QP-Tan-modi} implies the CBF condition in \eqref{QP-Tan}
    \begin{equation*}
       L_{f'}h(x)+L_gh(x)u'+\alpha(h(x))\geq 0,
    \end{equation*}
    for the constructed $\alpha\in\mathcal{K}_e$. Forward invariance of $\mathcal{S}$ follows from standard application of Nagumo’s Theorem \cite{blanchini1999set}.
    
    3) and 4) By \cite[Theorem 2]{tan2024undesired}, an equilibrium exists on $\partial\mathcal{S}$ if and only if the CBF constraint is active at that point. So if an equilibrium exists in $\mathrm{Int}\mathcal{S}$, then at that point the CBF constraint must be inactive. In this case, the QPs \eqref{QP-Tan} and \eqref{QP-Tan-modi} are equivalent. On the other hand, we have $\lambda(x)h(x)=\alpha(h(x))=0$, for all $x\in\partial\mathcal{S}$. This means the QPs \eqref{QP-Tan} and \eqref{QP-Tan-modi} are also equivalent on $\partial\mathcal{S}$. Therefore, parts 3) and 4) follow from parts 3) and 4) of Proposition \ref{prop-tan}.

    5) Since $u_{\mathrm{nom}}(x)$ satisfies the CLF condition, we have $f'(0)=0$, $F'_V(x)\leq 0$ for all $x\in\mathbb{R}^n$. Furthermore, we have $F'_\lambda(0)=L_{f'}h(0)+\lambda(0)(h(0))=\lambda(0)(h(0))>0$. Then there exists a neighborhood of the origin where $F'_\lambda(0)>0$. In this region, the CBF constraint is inactive. Thus the input generated by \eqref{QP-Tan} and \eqref{QP-Tan-modi} coincide. The claim then follows from item 5) of Proposition \ref{prop-tan}.
\end{proof}

\section{Design of $\lambda(x)$}\label{sec4}
In this section, we illustrate how to select a non-constant $\lambda(x)$ to improve the performance of the system. We will assume functions $f(x)$, $g(x)$, $h(x)$, $V(x)$, $\gamma(x)$, $u_\mathrm{nom}(x)$ are all polynomial functions of $x$ with real coefficients. We will use the generalized S-procedure for polynomial functions in Lemma \ref{S-pro} to certify set containment conditions. These conditions are then formulated as SOS optimization problems.  

\subsection{Region of attraction}
Since the QP-based controller \eqref{QP-Tan-modi} is only locally asymptotically stable, its region of attraction (ROA) \cite{khalil2002nonlinear} is of particular interest. The ROA is the set of all initial conditions from which the solution of the system converges to the equilibrium point $x^*$. Finding the ROA analytically can be challenging and hence estimating the ROA is the focus of this section. We will show that with the modified CBF condition \eqref{eq-CBF-modi} via the introduction of the locally Lipschitz function $\lambda(x)$, we are able to formulate the estimation of the ROA in the form of a programme which can be solved with computational tools. 

Under the assumptions in Theorem \ref{thm-CBF-modi}, we have $F'_V(x)\leq 0$ for all $x\in\mathbb{R}^n$. As a result, from \eqref{sln-u}, we know that $u'(x)=0$ and $u(x)=u_{\mathrm{nom}}(x)$ for  $x\in\Omega_{\overline{\mathrm{cbf}}}^{\overline{\mathrm{clf}}}\cup\Omega_{\mathrm{cbf},1}^{\overline{\mathrm{clf}}}\cup\Omega_{\overline{\mathrm{cbf}}}^{\mathrm{clf}}\cup\Omega_{\mathrm{cbf},1}^{\mathrm{clf}}$. Additionally, when the CLF constraint is inactive, $\delta(x)=0$ and the CLF condition \eqref{eq-CLF} is satisfied. In a nutshell, $L_{f'}V(x)+L_gV(x)u'+\gamma(V(x))\leq 0$ might be violated only if $x\in\Omega_{\mathrm{cbf},2}^{\mathrm{clf}}$. Thus, the sub-level set $\mathcal{R}_\eta=\{x\in\mathbb{R}^n:V(x)\leq \eta\}$ can be used to estimate the ROA of \eqref{plt} for any $\eta$ such that $\mathcal{R}_\eta\cap \Omega_{\mathrm{cbf},2}^{\mathrm{clf}}=\emptyset$. 

The following result formulates the estimation of ROA into a standard SOS program.
\begin{proposition}\label{prop-ROA}
    Suppose all conditions in Theorem \ref{thm-CBF-modi} hold. If there exist $\lambda\in\Sigma[x], \lambda_1\in\Sigma[x],\lambda_2\in\Sigma[x]$, $\varepsilon>0$ and $\eta>0$ such that
    \begin{equation}\label{sos-ROA}
        \begin{split}
    &\underset{\lambda,\lambda_1,\lambda_2,\varepsilon,\eta}{\mathrm{max}} \eta\\ 
        \text{s.t.}\ &\lambda-\varepsilon\in\Sigma[x],\\
            &V-\eta-\lambda_1(F'_V||b_1||^2-F'_\lambda b_2b_1^T)\\
            &-\lambda_2(F'_Vb_1b_2^T-F'_\lambda (1/p+||b_2||^2))\in\Sigma[x],
        \end{split}
    \end{equation}
    then all conclusions of Theorem \ref{thm-CBF-modi} hold for the controller generated by the QP \eqref{QP-Tan-modi}. Moreover, $\mathcal{R}_\eta=\{x\in\mathbb{R}^n:V(x)\leq \eta\}$ is a subset of ROA of \eqref{plt}.
\end{proposition}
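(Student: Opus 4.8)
The plan is to separate the statement into two essentially independent claims: (a) that the $\lambda$ returned by the SOS program is an admissible modifier so that Theorem \ref{thm-CBF-modi} applies unchanged, and (b) that the second SOS constraint certifies the ROA estimate $\mathcal{R}_\eta$.

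For (a), I would simply read off from the constraint $\lambda-\varepsilon\in\Sigma[x]$ that $\lambda(x)\ge\varepsilon>0$ for every $x$. Being a polynomial, $\lambda$ is locally Lipschitz; being bounded below by $\varepsilon>0$, it is nonnegative and never vanishes, so the defining requirement ``$\lambda(x)=0$ only if $h(x)=0$'' holds vacuously. Thus $\lambda$ is a legitimate choice in the modified CBF condition \eqref{eq-CBF-modi}, all standing hypotheses of Theorem \ref{thm-CBF-modi} are in force, and conclusions 1)--5) follow directly from that theorem.

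The substance of the proposition lies in (b). Here I would build on the discussion preceding the statement: since $u_{\mathrm{nom}}$ satisfies the CLF condition we have $F'_V\le 0$ everywhere, and from the closed-form solution \eqref{sln-u} the controller collapses to $u=u_{\mathrm{nom}}$ with an inactive (hence zero-slack) CLF constraint on every region except $\Omega_{\mathrm{cbf},2}^{\mathrm{clf}}$. Consequently $L_{f'}V+L_gV u'+\gamma(V)\le 0$, i.e. $\dot V\le-\gamma(V)\le 0$, can fail only on $\Omega_{\mathrm{cbf},2}^{\mathrm{clf}}$. I would then recognize the second SOS constraint as an instance of Lemma \ref{S-pro} with $p_0=V-\eta$, $p_1=F'_V\|b_1\|^2-F'_\lambda b_2 b_1^T$, $p_2=F'_V b_1 b_2^T-F'_\lambda(1/p+\|b_2\|^2)$ and SOS multipliers $\lambda_1,\lambda_2$, which certifies $\{p_1\ge 0\}\cap\{p_2\ge 0\}\subseteq\{V\ge\eta\}$. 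Dropping the extra defining condition $b_1\neq 0$ only enlarges the left-hand set, so in particular $\Omega_{\mathrm{cbf},2}^{\mathrm{clf}}\subseteq\{V\ge\eta\}$, equivalently $\{V<\eta\}\cap\Omega_{\mathrm{cbf},2}^{\mathrm{clf}}=\emptyset$. Hence $\dot V<0$ on $\{0<V<\eta\}$, $V$ acts as a Lyapunov function on the sublevel set $\mathcal{R}_\eta$, and standard Lyapunov arguments place $\mathcal{R}_\eta$ inside the ROA of \eqref{plt}.

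The delicate point, and the step I expect to require the most care, is the boundary $\{V=\eta\}$: Lemma \ref{S-pro} delivers only the non-strict inclusion $\Omega_{\mathrm{cbf},2}^{\mathrm{clf}}\subseteq\{V\ge\eta\}$, so $\mathcal{R}_\eta$ and $\Omega_{\mathrm{cbf},2}^{\mathrm{clf}}$ are certified disjoint only on the open interior, and $\dot V$ need not be strictly negative where $V=\eta$. I would close this gap using continuity of $\dot V$ (guaranteed since $u$, and hence the closed loop, is locally Lipschitz): strict negativity of $\dot V$ on the open interior forces $\dot V\le 0$ on the compact level set $\{V=\eta\}$, so $\mathcal{R}_\eta$ is forward invariant, and LaSalle's invariance principle together with positive definiteness of $V$ then excludes any invariant set in $\mathcal{R}_\eta$ other than the origin, yielding convergence of all trajectories starting in $\mathcal{R}_\eta$. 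A cleaner alternative, if strictness is preferred, is to subtract a small positive-definite term from $p_0$ in the SOS constraint so that the certified containment becomes $\Omega_{\mathrm{cbf},2}^{\mathrm{clf}}\subseteq\{V>\eta\}$ outright.
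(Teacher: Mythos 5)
Your proposal follows essentially the same route as the paper: the paper likewise reads $\lambda(x)>0$ off the constraint $\lambda-\varepsilon\in\Sigma[x]$ to invoke Theorem \ref{thm-CBF-modi}, and applies Lemma \ref{S-pro} with exactly your choice of $p_0,p_1,p_2$ to conclude that $F'_V\|b_1\|^2-F'_\lambda b_2b_1^T\geq 0$ and $F'_Vb_1b_2^T-F'_\lambda(1/p+\|b_2\|^2)\geq 0$ imply $V(x)\geq\eta$; the reduction ``the CLF inequality can fail only on $\Omega_{\mathrm{cbf},2}^{\mathrm{clf}}$'' is the paper's discussion preceding the proposition, and the fact that dropping $b_1\neq 0$ only enlarges the certified set is its Remark \ref{rmk-ROA}. (One wording slip on your side: on $\Omega_{\mathrm{cbf},2}^{\overline{\mathrm{clf}}}$ the controller is \emph{not} $u_{\mathrm{nom}}$, by \eqref{sln-u} it is $u'=-F'_\lambda b_1^T/\|b_1\|^2$; what is true, and what you actually use, is that the CLF constraint is inactive there.) Where you go beyond the paper is the boundary issue, and you are right to flag it: the paper's proof asserts that $V\geq\eta$ on $\Omega_{\mathrm{cbf},2}^{\mathrm{clf}}$ ``is sufficient to ensure $\mathcal{R}_\eta\cap\Omega_{\mathrm{cbf},2}^{\mathrm{clf}}=\emptyset$'', silently ignoring possible contact on the level set $\{V=\eta\}$; the S-procedure only certifies disjointness of the \emph{open} sublevel set, and the paper offers no argument for the closed one.

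Two remarks on your patch. First, the continuity step (strict negativity of $\dot V$ on $\{0<V<\eta\}$ forces $\dot V\le 0$ on $\{V=\eta\}$) tacitly requires every point of $\{V=\eta\}$ to be a limit of points of $\{V<\eta\}$, which is not automatic for an arbitrary positive definite $V$. Here it does hold, because $u_{\mathrm{nom}}$ satisfying the CLF condition gives $F'_V\le 0$ everywhere, which rules out critical points of $V$ away from the origin: if $\nabla V(x_0)=0$ with $x_0\neq 0$, then $F'_V(x_0)=\gamma(V(x_0))>0$, a contradiction. Second, once you have this, you get more than $\dot V\le 0$ on the boundary: passing to the limit in $\dot V(y)\le-\gamma(V(y))$ along interior points $y\to x_0$ yields $\dot V(x_0)\le-\gamma(\eta)<0$, so the strict Lyapunov decrease holds on all of $\mathcal{R}_\eta$ and LaSalle is not needed. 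This matters because your LaSalle step as written is not sound: with only $\dot V\le 0$ on $\{V=\eta\}$, the largest invariant set inside $\{x\in\mathcal{R}_\eta:\dot V(x)=0\}$ could a priori contain entire trajectories lying in $\{V=\eta\}$, and positive definiteness of $V$ by itself does not exclude them. Your alternative fix, strengthening $p_0$ so that the certificate yields $\Omega_{\mathrm{cbf},2}^{\mathrm{clf}}\subseteq\{V>\eta\}$, is also valid and is the cleaner repair from an implementation standpoint.
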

\begin{proof}
     Since $\lambda-\varepsilon\in\Sigma[x]$, we have $\lambda(x)>0$ for all $x\in\mathbb{R}^n$. Therefore, the results in Theorem \ref{thm-CBF-modi} apply. Applying the S-procedure for polynomial functions, the second condition in (\ref{sos-ROA}) guarantees that, if $F'_V||b_1||^2-F'_\lambda b_2b_1^T\geq0$ and $F'_Vb_1b_2^T-F'_\lambda (1/p+||b_2||^2)\geq0$, we have $V(x)-\eta\geq0$ which is sufficient to ensure that $\mathcal{R}_\eta\cap \Omega_{\mathrm{cbf},2}^{\mathrm{clf}}=\emptyset$.
\end{proof}

\begin{remark}
    \label{rmk-ROA}
    Given the upper bound on the order of polynomials, the optimization problem \eqref{sos-ROA} is a standard SOS program which can be efficiently solved, by SOSTOOLS \cite{papachristodoulou2013sostools}, for instance. Increasing the order of polynomials might improve the result. Note that, the set $\Omega_{\mathrm{cbf},2}^{\mathrm{clf}}$ in fact excludes $x$ such that $b_1(x)=L_gh(x)=0$. Thus, \eqref{sos-ROA} is actually a relaxed problem allowing $b_1(x)=L_gh(x)=0$. This is needed since we only want to work with semi-algebraic sets which lead to a convex program. The relaxation is exact if $L_gh(x)\neq 0$ for all $x\in\mathbb{R}^{n}$. If $L_gh(x)=0$ for some $x\in\mathbb{R}^{n}$, then by the definition of CBF, we have $F'_\lambda\geq 0$. Moreover, $\{x\in\mathbb{R}^n:F_{\lambda}'(x)=0, L_gh(x)=0\}$ needs to be empty meaning $F'_\lambda>0$ and the CBF constraint is inactive for all $x$ such that $L_gh(x)=0$. By part 5) of Theorem \ref{thm-CBF-modi}, $u_\mathrm{nom}$ is a valid stabilizing control law. However, there is no guarantee that these points belong to the actual ROA of \eqref{plt}.
\end{remark}

\subsection{Search for robust CBFs}
When the dynamics of \eqref{plt} are complicated and of higher orders or subject to disturbances, the term $\lambda(x)$ might be useful in the search of CBFs that will be used in the subsequent QP-based design. We illustrate this point by searching robust CBFs defined in \cite{jankovic2018robust} for systems subject to bounded (not necessarily small) disturbances.

Consider the perturbed system \eqref{plt} for some $w\in\mathbb{R}^w$:
\begin{equation}
    \label{plt-dis}
    \dot{x}=f(x)+g(x)u+p(x)w,
\end{equation}
where $||w||\leq\tilde{w}$. The CBF condition can be modified to account for the disturbance signal $w$,
\begin{equation}
        \label{eq-R-CBF}
        \underset{u\in\mathbb{R}^m}{\sup}[L_fh(x)+L_gh(x)u+\alpha(h(x))]\geq ||L_ph(x)||\tilde{w},
\end{equation}
for all $x\in\mathbb{R}^n$. Moreover, suppose we are given a set
\begin{equation}
    \label{eq-setC}
    \mathcal{C}=\{x\in\mathbb{R}^n:c(x)\geq 0\},
\end{equation}
where $c(x)\in{R}[x]$. We want to find a CBF $h(x)\in{R}[x]$ and its associated safe set such that $\mathcal{S}\subseteq\mathcal{C}$. 

Since $p(x)$ and $w$ might be unknown, instead of directly finding a robust CBF for \eqref{plt-dis}, we find a CBF for \eqref{plt} while maximizing its robusness margin. A necessary and sufficient condition for $h(x)$ to be a valid CBF for \eqref{plt} is $L_fh(x)+\lambda(x)h(x)\geq 0$ if $L_gh(x)=0$. Applying the generalized S-procedure, the above condition holds if there exist $\eta\geq0$ and $\lambda_1(x)\in\mathbb{R}[x]$ such that $L_fh(x)+\lambda(x)h(x)-\eta+\lambda_1(x)L_gh(x)\in\Sigma[x]$. The positive value of $\eta$ can be used as a measure of robustness. A larger $\eta$ means the safety of the unperturbed system \eqref{plt} can be maintained under larger disturbances. Similarly the existence of $\lambda_2(x)\in\Sigma[x]$ such that $c(x)-\lambda_2(x)h(x)$ ensures that $\mathcal{S}\subseteq\mathcal{C}$. We summarize the above discussion in the following proposition. 
\begin{proposition}
    \label{prop-CBF-search}
    Consider system \eqref{plt}, if there exist $\lambda\in\Sigma[x], \lambda_1\in\mathbb{R}[x], \lambda_2(x)\in\Sigma[x],h\in\mathbb{R}[x]$, $\varepsilon>0$, and $\eta\geq0$ such that the following optimization problem is feasible,
    \begin{equation}\label{sos-CBF}
        \begin{split}
    &\underset{\lambda,\lambda_1,\lambda_2,h,\varepsilon,\eta}{\mathrm{max}} \eta\\ 
        \text{s.t.}\ &\lambda(x)-\varepsilon\in\Sigma[x],\\      &L_fh(x)+\lambda(x)h(x)-\eta+\lambda_1(x)L_gh(x)\in\Sigma[x],\\
        & c(x)-\lambda_2(x)h(x)\in\Sigma[x].
        \end{split}
    \end{equation}
    Then $h(x)$ is a valid CBF for \eqref{plt}.
\end{proposition}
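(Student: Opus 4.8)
The plan is to show that feasibility of the SOS program \eqref{sos-CBF} forces $h$ to satisfy the modified CBF condition \eqref{eq-CBF-modi}, which is exactly what makes $h$ a valid CBF in the sense used throughout the paper. The backbone of the argument is the equivalence stated just before the proposition: \eqref{eq-CBF-modi} holds if and only if $L_fh(x)+\lambda(x)h(x)\geq 0$ at every $x$ with $L_gh(x)=0$. Indeed, whenever $L_gh(x)\neq 0$ one may drive $L_gh(x)u$ to $+\infty$ by choosing $u$ aligned with $L_gh(x)^\top$, so the supremum in \eqref{eq-CBF-modi} is unbounded above and trivially nonnegative; the condition is therefore only binding on the zero set $\{x:L_gh(x)=0\}$. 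Establishing this reduction first is what converts the statement into a pointwise polynomial nonnegativity claim amenable to the S-procedure.

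First I would check that the $\lambda$ produced by \eqref{sos-CBF} is admissible in the sense required in Section \ref{sec-modi-CBF}. Since $\lambda-\varepsilon\in\Sigma[x]$ and an SOS polynomial is nonnegative, we have $\lambda(x)\geq\varepsilon>0$ for all $x\in\mathbb{R}^n$. Thus $\lambda$ is a strictly positive polynomial, hence locally Lipschitz and nonnegative, and the requirement that $\lambda(x)=0$ only when $h(x)=0$ holds vacuously because $\lambda$ never vanishes. So $\lambda$ is a legitimate multiplier for the modified CBF condition.

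The core step uses the second constraint of \eqref{sos-CBF}. Membership in $\Sigma[x]$ gives, for every $x\in\mathbb{R}^n$,
\begin{equation*}
    L_fh(x)+\lambda(x)h(x)-\eta+\lambda_1(x)L_gh(x)\geq 0.
\end{equation*}
Evaluating on the zero set $\{x:L_gh(x)=0\}$, the multiplier term $\lambda_1(x)L_gh(x)$ vanishes irrespective of $\lambda_1$, leaving
\begin{equation*}
    L_fh(x)+\lambda(x)h(x)\geq \eta\geq 0.
\end{equation*}
This is precisely the reduced condition identified above, so $h$ satisfies \eqref{eq-CBF-modi} and is a valid CBF. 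Note that $\lambda_1$ is taken in $\mathbb{R}[x]$ rather than $\Sigma[x]$ exactly because it encodes the equality $L_gh(x)=0$ (equivalently the pair $L_gh\geq 0$, $-L_gh\geq 0$ fused into one sign-indefinite multiplier); no positivity of $\lambda_1$ is needed since its contribution disappears on the relevant set. For completeness, the third constraint $c(x)-\lambda_2(x)h(x)\in\Sigma[x]$ with $\lambda_2\in\Sigma[x]$ is a direct instance of Lemma \ref{S-pro} and certifies $\mathcal{S}=\{x:h(x)\geq 0\}\subseteq\{x:c(x)\geq 0\}=\mathcal{C}$, while a strictly positive optimal $\eta$ quantifies the robustness margin discussed in the text.

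The main obstacle is getting the equality-constraint encoding right when $L_gh(x)$ is vector valued: the reduction to the zero set and the term $\lambda_1(x)L_gh(x)$ must be stated with $\lambda_1$ of matching dimension (a row/column of polynomials paired against the $m$ components of $L_gh$), and one must verify that the sign-indefinite multiplier genuinely certifies nonnegativity on $\{L_gh=0\}$ rather than on a larger semialgebraic set. Once the sup-over-$u$ characterization of \eqref{eq-CBF-modi} is in place, the remainder of the argument is routine.
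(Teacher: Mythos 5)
Your proof is correct and follows essentially the same route as the paper: reduce the modified CBF condition \eqref{eq-CBF-modi} to nonnegativity of $L_fh(x)+\lambda(x)h(x)$ on the set $\{x: L_gh(x)=0\}$ (since the supremum over $u$ is $+\infty$ elsewhere), then observe that the second SOS constraint certifies exactly this because the sign-indefinite multiplier term $\lambda_1(x)L_gh(x)$ vanishes on that set, with the third constraint giving $\mathcal{S}\subseteq\mathcal{C}$ via Lemma \ref{S-pro}. In fact your write-up is somewhat more careful than the paper's own (informal) argument, since you also verify that $\lambda$ is an admissible multiplier and you flag the dimensional bookkeeping needed when $L_gh$ is vector valued, a point the paper glosses over.
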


\begin{remark}
    \label{rmk-CBF-search}
    The optimization problem \eqref{sos-CBF} contains product terms of polynomial function variables $(\lambda,h)$, $(\lambda_1,\frac{\partial h}{\partial x})$, and $(\lambda_2,h)$. Therefore, the optimization problem \eqref{sos-CBF} is in general non-convex, even when the order of polynomial functions are fixed. However, if $h$ is given, and an upper bound of orders of polynomials are specified, then \eqref{sos-CBF} becomes a standard convex SOS program of the remaining variables. The same argument applies if $\lambda,\lambda_1$, and $\lambda_2$ are fixed. In practice, SOS programs like \eqref{sos-CBF} can be solved in an alternating fashion between variables $(\lambda,\lambda_1,\lambda_2)$ and $h$ with the help of some potential slack variables, see \cite{jarvis2003some,yin2021backward,schweidel2022safe,lin2023secondary,schneeberger2023sos,wang2023safety} for detailed steps of implementations. 
\end{remark}

\begin{remark}
    \label{rmk-coma}
    Another observation is that, suppose a linear $\mathcal{K}_e$ function of $h(x)$ leads to a valid CBF with $\lambda(x)=\lambda>0$. Then, for $x\in \mathrm{Int} S$, we have $h(x)>0$. Hence, the first two conditions of \eqref{sos-CBF} ensure that there always exists another $\lambda(x)\geq \lambda$ such that \eqref{sos-CBF} remains feasible with other variables unchanged (noting that the last condition in \eqref{sos-CBF} does not depend on $\lambda$). As a result, the found CBF with $\lambda(x)=\lambda>0$ together with other variables can be used as the new initial condition to run an alternating algorithm to seek an improved $\eta$. In this sense, if we restrict our attention to the interior of the safe set (where reciprocal CBF can also be used), the choice of $\lambda(x)h(x)$ in the CBF condition can do no worse than its counterpart $\lambda h(x)$.   
\end{remark}

\subsection{Discussions}
There are much more to investigate regarding how different choices of $\lambda(x)$ can impact the closed loop performance. One issue that stands out is the input constraint. From the solution to the QP given in Lemma \ref{lem-sln-QP}, it can be seen from the expression of $F'_\lambda(x)$ that a large $\lambda(x)$ leads to less modification of $u_\mathrm{nom}$. However, it also results in a more aggressive control input which makes the system suffer from potential saturation issues and high sensitivity to measurement noises. There are SOS-based results that try to address this issue from different perspectives \cite{wang2023safety,schneeberger2023sos}. However, there is no detailed discussion on how the choice of $\lambda(x)$ impacts satisfaction of input constraints. 

Another interesting feature of $\lambda(x)$ compared to a naive choice of constant $\lambda$ is that we can assign weights on the safety violation term $\lambda(x)h(x)$ in different locations of the state space. Just like loop-shaping synthesis in the frequency domain. For example, for a second order system, $\lambda(x)=x_1^2+kx_2^2+1$ with a very large positive $k$ means that when $x_2$ is away from the origin, it gains more attention than $x_1$. A detailed research on these topics will be left for future research.

\section{Numerical simulation}\label{sec5}
In this section, we illustrate our main results via numerical simulations on Example 2 of \cite{tan2024undesired} to illustrate the improved robustness margin in the search for a robust CBF which is achieved by solely replacing $\alpha(h(x))$, $\alpha\in\mathcal{K}_e$, with $\lambda(x)h(x)$ for some strictly positive SOS function $\lambda(x)$. 

Consider the following linear control system
\begin{equation}\label{example1}
    \begin{split}
        \dot{x}_1&=-x_2\\
        \dot{x}_2&=-x_1+u.
    \end{split}
\end{equation}
In \cite{tan2024undesired}, it is shown that $h(x)=-0.1x_1^2-0.15x_1x_2-0.1x_2^2+4.9$ is indeed a valid CBF according to \eqref{eq-CBF}, where $\alpha(x)\in\mathcal{K}_e$ is simply $\alpha(x)=x$. In this setup, it is shown that, with $L_gh(x)=0$, $L_fh(x)+\alpha(h(x))=0.0389x_2^2+4.9\geq 0$.

We aim to increase the robust CBF margin $4.9$ by considering the same CBF but with $\alpha(h(x))$ replaced by $\lambda(x)h(x)$, where $\lambda(x)$ is to be designed. Since the set $\mathcal{C}$ is not considered, the last constraint of \eqref{sos-CBF} can be removed. In addition, $h(x)$ is given, we choose $\varepsilon=0.001$ so that $\lambda(x)$ is strictly positive, and require all polynomial functions have orders no larger than 4. Consequently, the simplified optimization problem in Proposition \ref{prop-CBF-search}  is a standard SOS program in $\lambda(x)\in\Sigma[x],\lambda_1(x)\in\mathbb{R}[x]$, and $\eta>0$. We solve the problem using SOSTOOLS \cite{papachristodoulou2013sostools} with SeDuMi being the solver \cite{doi:10.1080/10556789908805766}. It turns out that, by choosing 
\begin{equation*}
\begin{split}
        \lambda(x)&=8.3192x_1^2+22.193x_1x_2+14.7935x_2^2+5.591,\\
        \lambda_1(x)&=-46259237.6433x_1-61679009.3186x_2.
\end{split}
\end{equation*}
the value of $\eta$ can be increased to $9.9$. To further improve the result, the orders of polynomials can be increased. Moreover, as discussed in Remark \ref{rmk-CBF-search}, $h(x)$ can be treated as a design variable and an alternating iteration can be performed.

\addtolength{\textheight}{-12cm}   % This command serves to balance the column lengths
                                  % on the last page of the document manually. It shortens
                                  % the textheight of the last page by a suitable amount.
                                  % This command does not take effect until the next page
                                  % so it should come on the page before the last. Make
                                  % sure that you do not shorten the textheight too much.

%%%%%%%%%%%%%%%%%%%%%%%%%%%%%%%%%%%%%%%%%%%%%%%%%

\section{Conclusions}\label{sec6}
In this work, a modified CBF condition is proposed. Namely, we replace the class $\mathcal{K}_e$ function of the CBF $h$ with the product $\lambda(x)h(x)$ for some $\lambda(x)>0$. It is shown that QP-based controllers using modified CBF preserves nice properties of standard QP-based controllers. For polynomial systems, we formulate the design of such $\lambda(x)$ as SOS programs which leads to a better result in the numerical simulation compared to an example from recent literature where a standard CBF is used. 

This paper serves as a starting point for investigating how the choice of the CBF and its parameters can impact the resulting QP-based control, its properties and the stability, safety and performance of the closed-loop system.

%%%%%%%%%%%%%%%%%%%%%%%%%%%%%%%

%%%%%%%%%%%%%%%%%%%%%%%%%%%%%%%%%%%%%%%%%%%%%%%%%%%%%%%%%%%%%%%%%%%%%%%%%%%%%%%%

%%%%%%%%%%%%%%%%%%%%%%%%%%%%%%%%%%%%%%%%%%%%%%%%%%%%%%%%%%%%%%%%%%%%%%%%%%%%%%%%
%\section*{APPENDIX}

%
%%%%%%%%%%%%%%%%%%%%%%%%%%%%%%%%%%%%%%%%%%%%%%%%%%%%%%%%%%%%%%%%%%%%%%%%%%%%%%%%

\bibliographystyle{ieeetr}
\bibliography{ref.bib}

\begin{thebibliography}{10}

\bibitem{blanchini1999set}
F.~Blanchini, ``Set invariance in control,'' {\em Automatica}, vol.~35, no.~11,
  pp.~1747--1767, 1999.

\bibitem{prajna2007framework}
S.~Prajna, A.~Jadbabaie, and G.~J. Pappas, ``A framework for worst-case and
  stochastic safety verification using barrier certificates,'' {\em IEEE
  Transactions on Automatic Control}, vol.~52, no.~8, pp.~1415--1428, 2007.

\bibitem{tee2009barrier}
K.~P. Tee, S.~S. Ge, and E.~H. Tay, ``Barrier lyapunov functions for the
  control of output-constrained nonlinear systems,'' {\em Automatica}, vol.~45,
  no.~4, pp.~918--927, 2009.

\bibitem{margellos2011hamilton}
K.~Margellos and J.~Lygeros, ``Hamilton--{J}acobi formulation for reach--avoid
  differential games,'' {\em IEEE Transactions on Automatic Control}, vol.~56,
  no.~8, pp.~1849--1861, 2011.

\bibitem{bansal2017hamilton}
S.~Bansal, M.~Chen, S.~Herbert, and C.~J. Tomlin, ``Hamilton-{J}acobi
  reachability: A brief overview and recent advances,'' in {\em Proceedings of
  the IEEE 56th Conference on Decision and Control}, pp.~2242--2253, IEEE,
  2017.

\bibitem{rawlings2017model}
J.~B. Rawlings, D.~Q. Mayne, M.~Diehl, {\em et~al.}, {\em Model predictive
  control: theory, computation, and design}, vol.~2.
\newblock Nob Hill Publishing Madison, WI, 2017.

\bibitem{wabersich2023data}
K.~P. Wabersich, A.~J. Taylor, J.~J. Choi, K.~Sreenath, C.~J. Tomlin, A.~D.
  Ames, and M.~N. Zeilinger, ``Data-driven safety filters: Hamilton-jacobi
  reachability, control barrier functions, and predictive methods for uncertain
  systems,'' {\em IEEE Control Systems Magazine}, vol.~43, no.~5, pp.~137--177,
  2023.

\bibitem{ames2016control}
A.~D. Ames, X.~Xu, J.~W. Grizzle, and P.~Tabuada, ``Control barrier function
  based quadratic programs for safety critical systems,'' {\em IEEE
  Transactions on Automatic Control}, vol.~62, no.~8, pp.~3861--3876, 2017.

\bibitem{ames2019control}
A.~D. Ames, S.~Coogan, M.~Egerstedt, G.~Notomista, K.~Sreenath, and P.~Tabuada,
  ``Control barrier functions: Theory and applications,'' in {\em 2019 18th
  European control conference (ECC)}, pp.~3420--3431, IEEE, 2019.

\bibitem{kolathaya2019input}
S.~Kolathaya and A.~D. Ames, ``Input-to-state safety with control barrier
  functions,'' {\em IEEE control systems letters}, vol.~3, no.~1, pp.~108--113,
  2019.

\bibitem{jankovic2018robust}
M.~Jankovic, ``Robust control barrier functions for constrained stabilization
  of nonlinear systems,'' {\em Automatica}, vol.~96, pp.~359--367, 2018.

\bibitem{xiao2021adaptive}
W.~Xiao, C.~Belta, and C.~G. Cassandras, ``Adaptive control barrier
  functions,'' {\em IEEE Transactions on Automatic Control}, vol.~67, no.~5,
  pp.~2267--2281, 2021.

\bibitem{breeden2022control}
J.~Breeden, K.~Garg, and D.~Panagou, ``Control barrier functions in
  sampled-data systems,'' {\em IEEE Control Systems Letters}, vol.~6,
  pp.~367--372, 2022.

\bibitem{wang2023safety}
H.~Wang, K.~Margellos, and A.~Papachristodoulou, ``Safety verification and
  controller synthesis for systems with input constraints,'' {\em
  IFAC-PapersOnLine}, vol.~56, no.~2, pp.~1698--1703, 2023.

\bibitem{lin2023secondary}
Y.~Lin, M.~S. Chong, and C.~Murguia, ``Secondary controller design for the
  safety of nonlinear systems via sum-of-squares programming,'' in {\em 2023
  62nd IEEE Conference on Decision and Control (CDC)}, pp.~7649--7654, IEEE,
  2023.

\bibitem{freeman2008robust}
R.~Freeman and P.~V. Kokotovic, {\em Robust nonlinear control design:
  state-space and Lyapunov techniques}.
\newblock Springer Science \& Business Media, 2008.

\bibitem{tan2024undesired}
X.~Tan and D.~V. Dimarogonas, ``On the undesired equilibria induced by control
  barrier function based quadratic programs,'' {\em Automatica}, vol.~159,
  p.~111359, 2024.

\bibitem{reis2020control}
M.~F. Reis, A.~P. Aguiar, and P.~Tabuada, ``Control barrier function-based
  quadratic programs introduce undesirable asymptotically stable equilibria,''
  {\em IEEE Control Systems Letters}, vol.~5, no.~2, pp.~731--736, 2020.

\bibitem{mestres2023optimization}
P.~Mestres and J.~Cort{\'e}s, ``Optimization-based safe stabilizing feedback
  with guaranteed region of attraction,'' {\em IEEE Control Systems Letters},
  vol.~7, pp.~367--372, 2023.

\bibitem{schneeberger2023sos}
M.~Schneeberger, F.~D{\"o}rfler, and S.~Mastellone, ``Sos construction of
  compatible control lyapunov and barrier functions,'' {\em IFAC-PapersOnLine},
  vol.~56, no.~2, pp.~10428--10434, 2023.

\bibitem{parillo2000structured}
P.~Parillo, ``Structured semidefinite programs and semialgebraic geometry
  methods in robustness and optimization,'' {\em PhD thesis, California
  Institute of Technology}, 2000.

\bibitem{blekherman2012semidefinite}
G.~Blekherman, P.~A. Parrilo, and R.~R. Thomas, {\em Semidefinite optimization
  and convex algebraic geometry}.
\newblock SIAM, 2012.

\bibitem{boyd1994linear}
S.~Boyd, L.~El~Ghaoui, E.~Feron, and V.~Balakrishnan, {\em Linear matrix
  inequalities in system and control theory}.
\newblock SIAM, 1994.

\bibitem{bochnak1987geometrie}
J.~Bochnak, M.~Coste, and M.-F. Roy, {\em G{\'e}om{\'e}trie alg{\'e}brique
  r{\'e}elle}, vol.~12.
\newblock Springer Science \& Business Media, 1987.

\bibitem{jarvis2003lyapunov}
Z.~W. Jarvis-Wloszek, {\em Lyapunov based analysis and controller synthesis for
  polynomial systems using sum-of-squares optimization}.
\newblock PhD thesis, the University of California, Berkeley, 2003.

\bibitem{khalil2002nonlinear}
H.~K. Khalil, ``Nonlinear systems,'' {\em 3rd edition}, 2002.

\bibitem{papachristodoulou2013sostools}
A.~Papachristodoulou, J.~Anderson, G.~Valmorbida, S.~Prajna, P.~Seiler,
  P.~Parrilo, M.~M. Peet, and D.~Jagt, ``{SOSTOOLS} version 4.00 sum of squares
  optimization toolbox for {MATLAB},'' {\em arXiv preprint arXiv:1310.4716},
  2013.

\bibitem{jarvis2003some}
Z.~Jarvis-Wloszek, R.~Feeley, W.~Tan, K.~Sun, and A.~Packard, ``Some controls
  applications of sum of squares programming,'' in {\em Proceedings of the IEEE
  42nd Conference on Decision and Control}, vol.~5, pp.~4676--4681, IEEE, 2003.

\bibitem{yin2021backward}
H.~Yin, M.~Arcak, A.~Packard, and P.~Seiler, ``Backward reachability for
  polynomial systems on a finite horizon,'' {\em IEEE Transactions on Automatic
  Control}, vol.~66, no.~12, pp.~6025--6032, 2021.

\bibitem{schweidel2022safe}
K.~S. Schweidel, H.~Yin, S.~W. Smith, and M.~Arcak, ``Safe-by-design
  planner--tracker synthesis with a hierarchy of system models,'' {\em Annual
  Reviews in Control}, 2022.

\bibitem{doi:10.1080/10556789908805766}
J.~F. Sturm, ``Using {SeDuMi} 1.02, a {MATLAB} toolbox for optimization over
  symmetric cones,'' {\em Optimization Methods and Software}, vol.~11, no.~1-4,
  pp.~625--653, 1999.

\end{thebibliography}
%%%%%%%%%%%%%%%%%%%%%%%%%%%%%%5
\end{document}